\numberwithin{equation}{section}
\newtheorem{theorem}{Theorem}
\newtheorem{lemma}[theorem]{Lemma}
\newtheorem{corollary}[theorem]{Corollary}
\newtheorem{proposition}[theorem]{Proposition}
\theoremstyle{definition}
\newtheorem{definition}[theorem]{Definition}
\newtheorem{example}[theorem]{Example}
\title{$(an+b)$-color compositions}
\author{Daniel Birmajer}
\address{Department of Mathematics\\ Nazareth College\\ 4245 East Ave.\\ Rochester, NY 14618}
\author{Juan B. Gil}
\address{Penn State Altoona\\ 3000 Ivyside Park\\ Altoona, PA 16601}
\author{Michael D. Weiner}
\newcommand{\domino}[2]{%
\tikz[scale=0.5,baseline=4pt]{%
\draw[very thick] (0,0) rectangle (2,1); \draw[thick,gray] (1,0) -- (1,1); 
\node at (0.5,0.5) {$#1$}; \node at (1.5,0.5) {$#2$}
}
}
\begin{document}
\maketitle

\begin{abstract}
For $a,b\in\mathbb{N}_0$, we consider $(an+b)$-color compositions of a positive integer $\nu$ for which each part of size $n$ admits $an+b$ colors. We study these compositions from the enumerative point of view and give a formula for the number of $(an+b)$-color compositions of $\nu$ with $k$ parts. Our formula is obtained in two different ways: 1) by means of algebraic properties of partial Bell polynomials, and 2) through a bijection to a certain family of weak compositions that we call {\em domino compositions}. We also discuss two cases when $b$ is negative and give corresponding combinatorial interpretations.
\end{abstract}

\section{Introduction}
\label{sec:introduction}

A composition of a positive integer $\nu$ with $k$ parts is an ordered $k$-tuple $(j_1, \dotsc, j_k)$ of positive integers called parts such that $j_1 + \dotsb + j_k = \nu$.

Given a sequence of nonnegative integers $w=(w_n)_{n\in\mathbb{N}}$, we define a $w$-color composition of $\nu$ to be a composition of $\nu$ such that part $n$ can take on $w_n$ colors. If $w_{n} = 0$, it means that we do not use the integer $n$ in the composition. Such colored compositions have been considered by many authors and continue to be of current interest. For a comprehensive account on the subject, we refer to the book by S.~Heubach and T.~Mansour \cite{HM10}.

If we let $W_n$ be the number of $w$-color compositions of $n$, Moser and Whitney \cite{MW61} observed that the generating functions $w(t)=\sum_{n=1}^\infty w_n t^n$ and $W(t)=\sum_{n=1}^\infty W_n t^n$ satisfy the relation $W(t) = \frac{w(t)}{1-w(t)}$, which means that the sequence $(W_n)_{n\in\mathbb{N}}$ is the invert transform of $(w_n)_{n\in\mathbb{N}}$.

In this paper, we consider the sequence of colors $w_n = an + b$ for $n \ge 1$, with $a, b \in \mathbb{N}_0$.  Thus $w(t) = \sum_{n=1}^\infty (an+b) t^n$, and we have
\begin{equation*}
  w(t) = \sum_{n=1}^\infty (an+b) t^n = \dfrac{at}{(1-t)^2}+  \dfrac{bt}{1-t} = \dfrac{(a+b)t-bt^2}{(1-t)^2}.
\end{equation*}
Therefore, $W(t) = \dfrac{w(t)}{1- w(t)} = \dfrac{(a+b)t-bt^2}{1 - (a+b+2)t + (b+1)t^2}$, and so the number $W_\nu$ of $(an + b)$-color compositions of $\nu$ satisfies the recurrence relation
\begin{equation}\label{eq:recurrence}
  W_{\nu} = (a + b + 2) W_{\nu-1} - (b + 1) W_{\nu-2} \;\text{ for } \nu > 2,
\end{equation}
with initial conditions $W_1 = a+ b$ and $W_2 = (a + b)^2 + (2a + b)$. 

\section{Colored compositions with $k$ parts}
\label{sec:comp-with-parts}

Let $c_{n,k}(w)$ be the number of $w$-color compositions of $n$ with exactly $k$ parts. In \cite{HL68}, Hoggatt and Lind derived the formula
\begin{equation} \label{eq:HL}
  c_{n,k}(w)=\sum_{\pi_k(n)} \frac{k!}{k_1!\cdots k_n!}\, w_1^{k_1}\cdots w_n^{k_n},
\end{equation}
where the sum runs over all $k$-part partitions of $n$, i.e.\ over all solutions of 
\[ k_1+2k_2+\cdots+nk_n=n \text{ such that } k_1+\cdots+k_n=k \]
with $k_j\in\mathbb{N}_0$ for all $j$. Observe that the right-hand side of \eqref{eq:HL} is precisely the $(n,k)$-th partial Bell polynomial $B_{n,k}(1!w_1, 2!w_2,\dots)$ multiplied by the factor $k!/n!$. Thus \eqref{eq:HL} may be written as
\begin{equation} \label{eq:Bell}
  c_{n,k}(w) = \frac{k!}{n!} B_{n, k} (1!w_1, 2! w_2, \dots),
\end{equation}
and the total number of such compositions of $n$ is $W_n = \sum_{k=1}^n c_{n,k}(w)$.

\begin{proposition}
Let $x=(x_n)_{n\in\mathbb{N}}$ and $y=(y_n)_{n\in\mathbb{N}}$ be sequences of nonnegative integers, and let $a,b\in \mathbb{Z}$. Letting $c_{0,0}(w)=1$ and $c_{m,j}(w)=0$ for $m<j$, we have
\[ c_{n,k}(ax+by) =  \sum_{m=0}^n\sum_{j = 0}^k \binom{k}{j} a^{j}b^{k-j}c_{m, j}(x) c_{n-m, k-j}(y). \]
\end{proposition}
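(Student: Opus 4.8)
The plan is to deduce the identity from a short computation with ordinary generating functions, after first recasting $c_{n,k}$ so that it makes sense for the (possibly negative) sequence $ax+by$. The key preliminary fact is that for an \emph{arbitrary} sequence $w=(w_n)_{n\in\mathbb N}$ one has the formal identity
\[
  \sum_{n\ge 0} c_{n,k}(w)\, t^n = w(t)^k,\qquad w(t)=\sum_{n\ge 1} w_n t^n .
\]
This is nothing but \eqref{eq:Bell} rewritten by means of the standard exponential generating function $\sum_{n\ge k} B_{n,k}(z_1,z_2,\dots)\tfrac{t^n}{n!}=\tfrac1{k!}\bigl(\sum_{m\ge 1} z_m\tfrac{t^m}{m!}\bigr)^k$ applied with $z_m=m!\,w_m$; when $w_n\in\mathbb N_0$ it is also immediate combinatorially, since a $w$-color composition with $k$ parts is a $k$-tuple of colored parts, and a single colored part contributes the factor $w(t)$. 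Because $c_{n,k}(w)=\tfrac{k!}{n!}B_{n,k}(1!w_1,2!w_2,\dots)$ is a polynomial in $w_1,\dots,w_n$, I will read $c_{n,k}(ax+by)$ throughout as the value of that polynomial at the integer sequence $(ax_n+by_n)_n$, so that the displayed identity stays valid regardless of the signs of $a,b$.

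With this in hand the argument is routine. Put $x(t)=\sum_n x_n t^n$ and $y(t)=\sum_n y_n t^n$; then the sequence $w_n=ax_n+by_n$ has generating function $w(t)=a\,x(t)+b\,y(t)$, and the binomial theorem gives
\[
  \sum_{n\ge 0} c_{n,k}(ax+by)\,t^n=\bigl(a\,x(t)+b\,y(t)\bigr)^k=\sum_{j=0}^k\binom{k}{j}a^{j}b^{k-j}\,x(t)^{j}\,y(t)^{k-j}.
\]
Applying the preliminary identity to $x$ and to $y$, we have $x(t)^j=\sum_{m\ge 0}c_{m,j}(x)t^m$ and $y(t)^{k-j}=\sum_{\ell\ge 0}c_{\ell,k-j}(y)t^\ell$, so the coefficient of $t^n$ in $x(t)^jy(t)^{k-j}$ is the Cauchy convolution $\sum_{m=0}^n c_{m,j}(x)\,c_{n-m,k-j}(y)$.

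I would finish by extracting the coefficient of $t^n$ from both sides of the last display and interchanging the two (finite) sums, which yields exactly
\[
  c_{n,k}(ax+by)=\sum_{m=0}^n\sum_{j=0}^k\binom{k}{j}a^{j}b^{k-j}\,c_{m,j}(x)\,c_{n-m,k-j}(y).
\]
It remains only to observe that the boundary conventions adopted in the statement are precisely the coefficients of $w(t)^k$ read off from the preliminary identity: $c_{0,0}(w)=1$ is the constant term of $w(t)^0$, and $c_{m,j}(w)=0$ for $m<j$ reflects that $w(t)^j$ begins in degree at least $j$. There is no real obstacle; the single point that needs care is the legitimacy of evaluating $c_{n,k}$ at a sequence with negative entries, which is why I would open by replacing the combinatorial definition with the polynomial $\tfrac{k!}{n!}B_{n,k}$ so that every identity in sight is purely formal. (Alternatively, one can bypass generating functions entirely and obtain the formula from the classical addition law $B_{n,k}(u+v)=\sum_{m,j}\binom{n}{m}B_{m,j}(u)B_{n-m,k-j}(v)$ together with the homogeneity relation $B_{n,j}(au_1,au_2,\dots)=a^jB_{n,j}(u_1,u_2,\dots)$, substituting $u_i=i!\,x_i$ and $v_i=i!\,y_i$ and simplifying with \eqref{eq:Bell}.)
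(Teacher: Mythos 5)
Your proof is correct, but it takes a genuinely different route from the paper. The paper works entirely inside the algebra of partial Bell polynomials: it writes $c_{n,k}(w)=\tfrac{k!}{n!}B_{n,k}(1!w_1,2!w_2,\dots)$ and then invokes the convolution identity $B_{n,k}(!(ax+by))=\sum_{m,j}\binom{n}{m}B_{m,j}(!(ax))B_{n-m,k-j}(!(by))$ together with homogeneity $B_{m,j}(!(ax))=a^jB_{m,j}(!x)$ (citing Comtet), and then converts back — exactly the ``alternative'' you sketch in your final parenthesis. Your main argument instead packages everything into the ordinary generating function identity $\sum_{n\ge 0}c_{n,k}(w)t^n=w(t)^k$, after which the proposition is just the binomial theorem applied to $w(t)=a\,x(t)+b\,y(t)$ plus a Cauchy product and coefficient extraction; your derivation of that identity from the Bell EGF (with $z_m=m!\,w_m$) is correct, and your checks of the boundary conventions ($c_{0,0}=1$ from $w(t)^0$, vanishing for $m<j$ from the order of $w(t)^j$) are the right ones. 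The trade-off: the paper's route reuses the Bell-polynomial machinery that is already the backbone of its formula $c_{n,k}(w)=\tfrac{k!}{n!}B_{n,k}(!w)$ and of the subsequent theorem, while your route is more self-contained and makes the identity transparently a formal power-series statement — in particular it handles negative $a,b$ cleanly, a point you are right to flag explicitly (the paper leaves this implicit in the polynomial nature of $B_{n,k}$), and it also gives the combinatorial reading of $w(t)^k$ for free when the colors are nonnegative.
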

\begin{proof}
Since $c_{n,k}(w) = \frac{k!}{n!} B_{n, k} (1!w_1,2!w_2,\dots)$, we can use basic properties of the partial Bell polynomials (see e.g.\ \cite[Sec.~3.3]{Comtet}) together with the notation $!w=(n!w_n)$ to get 
\begin{align*}
  c_{n,k}(ax+by) &= \frac{k!}{n!} B_{n, k} (!(ax+by)) \\
  &= \frac{k!}{n!} \sum_{m=0}^n\sum_{j = 0}^k \binom{n}{m} B_{m, j}(!(ax)) B_{n-m, k-j} (!(by)) \\
  &= \frac{k!}{n!} \sum_{m=0}^n\sum_{j = 0}^k \binom{n}{m} a^j B_{m, j}(!x) b^{k-j}B_{n-m, k-j} (!y) \\
  &= \frac{k!}{n!} \sum_{m=0}^n\sum_{j = 0}^k \binom{n}{m} a^{j}b^{k-j}
  \frac{m!}{j!} c_{m, j}(x) \frac{(n-m)!}{(k-j)!} c_{n-m, k-j} (y) \\
  &= \sum_{m=0}^n\sum_{j = 0}^k \binom{k}{j} a^{j}b^{k-j} c_{m, j}(x) c_{n-m, k-j}(y).
\end{align*}
\end{proof}

\begin{theorem}
The number of $(an+b)$-color compositions of $\nu$ with $k$ parts is given by
\[ c_{\nu,k}(an+b) = \sum_{j=0}^{k} a^j b^{k-j} \binom{k}{j} \binom{\nu+j-1}{\nu-k}. \]
Thus the total number of $(an+b)$-color compositions of $\nu$ is 
\begin{equation}\label{eq:total_compositions}
 W_\nu = \sum_{k=1}^{\nu} \sum_{j=0}^{k} a^j b^{k-j} \binom{k}{j} \binom{\nu+j-1}{\nu-k}.
\end{equation}
\end{theorem}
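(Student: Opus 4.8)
The plan is to invoke the Proposition with the splitting $an+b=a\,x_n+b\,y_n$, where $x=(x_n)_{n\in\mathbb{N}}$ is given by $x_n=n$ and $y=(y_n)_{n\in\mathbb{N}}$ is the constant sequence $y_n=1$. Since $(x_n)$ and $(y_n)$ are sequences of nonnegative integers and $a,b\in\mathbb{N}_0\subset\mathbb{Z}$, the Proposition applies and yields
\[
  c_{\nu,k}(an+b)=\sum_{m=0}^{\nu}\sum_{j=0}^{k}\binom{k}{j}\,a^{j}b^{k-j}\,c_{m,j}(x)\,c_{\nu-m,k-j}(y).
\]
Thus the problem reduces to computing the two ``pure'' counts $c_{m,j}(x)$ and $c_{m,j}(y)$ and then carrying out the convolution in $m$.

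Next I would identify those counts. The quantity $c_{m,j}(y)$ is the number of ordinary compositions of $m$ into $j$ parts, so $c_{m,j}(y)=\binom{m-1}{j-1}$ for $j\ge1$ (with $c_{0,0}(y)=1$). The quantity $c_{m,j}(x)$ counts compositions of $m$ into $j$ parts in which a part of size $i$ is assigned one of $i$ colors, i.e.\ Agarwal's $n$-color compositions; the quickest route is via generating functions: a single colored part has generating function $\sum_{i\ge1} i\,t^{i}=t/(1-t)^{2}$, hence the generating function of the $j$-part colored compositions is $t^{j}/(1-t)^{2j}$, and extracting the coefficient of $t^{m}$ gives $c_{m,j}(x)=\binom{m+j-1}{2j-1}$ for $j\ge1$ (with $c_{0,0}(x)=1$). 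Alternatively, both formulas can be read off from the partial Bell polynomials $B_{m,j}$ evaluated at the sequences $(n!)_{n}$ and $(n\cdot n!)_{n}$, respectively.

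Substituting these into the double sum, the inner sum over $m$ becomes, for $1\le j\le k-1$,
\[
  \sum_{m}\binom{m+j-1}{2j-1}\binom{\nu-m-1}{k-j-1}
   =\sum_{\ell}\binom{\ell}{2j-1}\binom{(\nu+j-2)-\ell}{k-j-1},
\]
after the change of index $\ell=m+j-1$, and the Vandermonde-type convolution $\sum_{\ell}\binom{\ell}{p}\binom{N-\ell}{q}=\binom{N+1}{p+q+1}$ collapses it to $\binom{\nu+j-1}{j+k-1}=\binom{\nu+j-1}{\nu-k}$. The boundary values $j=0$ and $j=k$ must be treated separately, because there $c_{m,0}(x)$ (resp.\ $c_{\nu-m,0}(y)$) is supported only at $m=0$ (resp.\ $m=\nu$); in each case the single surviving term again equals $\binom{\nu+j-1}{\nu-k}$, so the claimed closed form holds for every $0\le j\le k$. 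Summing $c_{\nu,k}(an+b)$ over $k=1,\dots,\nu$ then gives \eqref{eq:total_compositions}.

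The only delicate point is this convolution step together with the two boundary cases; everything else is routine bookkeeping. I would also remark that the result falls out directly, without the Proposition, from the identity $\sum_{\nu\ge1}c_{\nu,k}(w)\,t^{\nu}=w(t)^{k}$: writing $w(t)=at/(1-t)^{2}+bt/(1-t)$ and expanding by the binomial theorem gives $w(t)^{k}=\sum_{j=0}^{k}\binom{k}{j}a^{j}b^{k-j}\,t^{k}/(1-t)^{k+j}$, whose coefficient of $t^{\nu}$ is again $\binom{\nu+j-1}{\nu-k}$. I would keep the Bell-polynomial derivation as the main argument, to match the theme announced in the abstract, and present the generating-function computation as a remark.
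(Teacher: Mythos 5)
Your proposal is correct and follows essentially the same route as the paper: it applies the Proposition with $x_n=n$, $y_n=1$, identifies $c_{m,j}(x)=\binom{m+j-1}{2j-1}$ and $c_{m,j}(y)=\binom{m-1}{j-1}$, and collapses the convolution in $m$ by a Vandermonde-type identity. The only differences are cosmetic: you use $\sum_{\ell}\binom{\ell}{p}\binom{N-\ell}{q}=\binom{N+1}{p+q+1}$ directly (handling $j=0$ and $j=k$ separately) where the paper negates upper indices first, and your generating-function remark via $w(t)^k$ is a valid independent shortcut to the same closed form.
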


\begin{proof}
We use the above proposition with the sequences $x_n=n$ and $y_n=1$. Then
\begin{align*}
c_{\nu,k}(an+b) &=  \sum_{m=0}^\nu\sum_{j = 0}^k \binom{k}{j} a^{j}b^{k-j}c_{m, j}(n) c_{\nu-m, k-j}(1) \\
&= \sum_{j = 0}^k a^{j}b^{k-j}\binom{k}{j} \sum_{m=j}^{\nu} \binom{m+j-1}{m-j} \binom{\nu-m-1}{k-j-1} \\
&= \sum_{j = 0}^k a^{j}b^{k-j}\binom{k}{j} \sum_{\ell=0}^{\nu-j} \binom{\ell+2j-1}{\ell} \binom{\nu-\ell-j-1}{k-j-1} \\
&= \sum_{j = 0}^k a^{j}b^{k-j}\binom{k}{j} \sum_{\ell=0}^{\nu-k} \binom{\ell+2j-1}{\ell} \binom{\nu-\ell-j-1}{\nu-k-\ell} \\
&= \sum_{j = 0}^k a^{j}b^{k-j}\binom{k}{j} (-1)^{\nu-k} \sum_{\ell=0}^{\nu-k}\binom{-2j}{\ell} \binom{j-k}{\nu-k-\ell} \\
&= \sum_{j = 0}^k a^{j}b^{k-j}\binom{k}{j} (-1)^{\nu-k} \binom{-j-k}{\nu-k} \\
&= \sum_{j = 0}^k a^{j}b^{k-j}\binom{k}{j} \binom{\nu+j-1}{\nu-k}.
\end{align*}
\end{proof}

\begin{example}
For some values of $a$ and $b$, \eqref{eq:total_compositions} gives nice formulas for the following sequences, listed in the OEIS \cite{Sloane}:

\medskip
\begin{center}
\begin{tabular}{r|ccr|c}
Compositions & Sequence &\hspace{2em} & Compositions & Sequence \\ \hline
\rule[-1ex]{0ex}{3.8ex} 
$n$-color & A001906 && $(2n-1)$-color & A003946\\[2pt]
$(n+1)$-color & A003480 && $2n$-color & A052530\\[2pt]
$(n+2)$-color & A010903 && $(2n+1)$-color & A060801\\[2pt]
$(n+3)$-color & A010908 && $(3n-1)$-color & A055841\\[2pt]
\end{tabular}
\end{center}
\end{example}

\section{Combinatorial interpretation: Domino compositions}
\label{sec:combinatorial}

In this section, an {\em $n$-domino} is a tile of the form
\begin{equation}\label{def:domino}
\domino{\alpha}{\beta} \;\text{ with $0<\alpha\le n$ and $0\le \beta\le n$}. \smallskip
\end{equation}

A domino with $\beta=0$ will be called a {\em zero $n$-domino}. An $n$-domino composition of $\nu$ is a weak composition of $\nu$ using $n$-dominos of the form \eqref{def:domino}. For example,
\begin{equation*}  
  \domino{1}{1}\, \domino{4}{0}\, \domino{1}{3}
\end{equation*}
is a domino composition of 10 with 4-dominos corresponding to the weak composition $(1,1,4,0,1,3)$.

\begin{definition}
For $a,b\in\mathbb{N}_0$, $n,k\in\mathbb{N}$, and $j\le k$, let $T^{a,b}_j(n,k)$ be the set of $n$-domino compositions of $n+j$ with $j$ nonzero $n$-dominos, available in $a$ different colors, and $k-j$ zero $n$-dominos available in $b$ different colors. Let $T^{a,b}(n,k) = \bigcup_{j=0}^n T^{a,b}_j(n,k)$.
\end{definition}

\begin{lemma} \label{lem:j_nonzero}
\[ \left|T^{a,b}_j(n,k)\right| = a^jb^{k-j} \binom{k}{j}\binom{n+j-1}{n-k}. \]
\end{lemma}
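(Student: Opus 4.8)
The plan is to count the set $T^{a,b}_j(n,k)$ directly by a case distinction on how many zero $n$-dominos occur and then assembling the pieces. First I would fix the $j$ nonzero $n$-dominos and the $k-j$ zero $n$-dominos and note that the color choices contribute an overall factor of $a^j b^{k-j}$, since each nonzero domino independently gets one of $a$ colors and each zero domino one of $b$ colors. What remains is to count, without colors, the number of $n$-domino compositions of $n+j$ that use exactly $j$ nonzero $n$-dominos and exactly $k-j$ zero $n$-dominos; call this number $N_j(n,k)$, so that $|T^{a,b}_j(n,k)| = a^j b^{k-j} N_j(n,k)$, and the goal becomes $N_j(n,k) = \binom{k}{j}\binom{n+j-1}{n-k}$.

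To compute $N_j(n,k)$, recall that a zero $n$-domino $\domino{\alpha}{0}$ contributes a single positive part $\alpha$ with $1\le\alpha\le n$, while a nonzero $n$-domino $\domino{\alpha}{\beta}$ contributes the pair $(\alpha,\beta)$ with $1\le\alpha\le n$ and $1\le\beta\le n$, i.e.\ two positive parts each between $1$ and $n$; since we are forming a \emph{weak} composition of $n+j$, the total of all these entries must equal $n+j$. The $\binom{k}{j}$ factor should come from choosing which $j$ of the $k$ domino-slots (in order) are the nonzero ones — equivalently, interleaving the $j$ nonzero dominos among the $k-j$ zero dominos. After this choice, the entries split into $k-j$ single entries $\alpha_i$ (from zero dominos) and $j$ pairs $(\alpha'_t,\beta'_t)$ (from nonzero dominos), subject to $1\le\alpha_i\le n$, $1\le\alpha'_t,\beta'_t\le n$, and $\sum\alpha_i + \sum(\alpha'_t+\beta'_t) = n+j$. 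So the remaining count is the number of integer solutions of $z_1+\dots+z_{k+j} = n+j$ with each $z_r\ge 1$ and each $z_r\le n$ (here $k+j$ is the total number of positive entries: $k-j$ from zero dominos plus $2j$ from nonzero dominos). The claim is that this equals $\binom{n+j-1}{n-k}$.

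The main obstacle is verifying that the constrained count $\#\{z_1+\dots+z_{k+j}=n+j : 1\le z_r\le n\}$ equals $\binom{n+j-1}{n-k}$. I would substitute $z_r = 1 + u_r$ with $0\le u_r \le n-1$ to get $u_1+\dots+u_{k+j} = n+j-(k+j) = n-k$ with each $u_r\le n-1$. The unrestricted number of such solutions is $\binom{(n-k)+(k+j)-1}{k+j-1} = \binom{n+j-1}{k+j-1} = \binom{n+j-1}{n-k}$, and the upper-bound constraints $u_r\le n-1$ are automatically satisfied because the total $n-k$ is itself at most $n-1$ (as $k\ge 1$), so no individual $u_r$ can exceed $n-1$. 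This closes the computation: $N_j(n,k) = \binom{k}{j}\binom{n+j-1}{n-k}$, hence $|T^{a,b}_j(n,k)| = a^j b^{k-j}\binom{k}{j}\binom{n+j-1}{n-k}$.

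A subtlety worth double-checking is the degenerate ranges: when $n<k$ both the claimed formula and the set are empty (a weak composition of $n+j$ into $k+j$ positive parts forces $k+j\le n+j$), and the parameter range $j\le \min(k,n)$ in the definition of $T^{a,b}_j(n,k)$ is consistent with $\binom{n+j-1}{n-k}$ being the right count; I would remark on this rather than belabor it. The one genuine step to write out carefully is the bijection in the previous paragraph — matching an $n$-domino composition to the ordered choice of nonzero-slot positions together with the tuple $(z_1,\dots,z_{k+j})$ — making sure the "weak" qualifier (allowing $\beta=0$, hence the zero dominos) is handled exactly as the split into single entries versus pairs.
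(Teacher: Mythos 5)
Your argument is correct and follows essentially the same route as the paper's proof: choose which $j$ of the $k$ dominos are nonzero ($\binom{k}{j}$ ways), count compositions of $n+j$ into the resulting $2j+(k-j)=k+j$ positive entries ($\binom{n+j-1}{k+j-1}=\binom{n+j-1}{n-k}$ of them), and multiply by $a^jb^{k-j}$ for the colors. Your only addition is the explicit check that the upper bounds $\alpha,\beta\le n$ on the domino entries are automatically satisfied, a point the paper leaves implicit.
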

\begin{proof}
Having $k$ dominos, there are $\binom{k}{j}$ ways to choose the $j$ nonzero dominos. Once the dominos are chosen, there are $2j+(k-j)=k+j$ spaces to place positive numbers whose sum is $n+j$. These are compositions of $n+j$ with $k+j$ parts and there are $\binom{n+j-1}{k+j-1}=\binom{n+j-1}{n-k}$ of them. Since the nonzero dominos come in $a$ colors and the zero dominos in $b$ colors, we need to multiply by $a^jb^{k-j}$ to account for all of the possibilities.
\end{proof}

\begin{theorem} \label{thm:bijection}
For any given $a,b\in \mathbb{N}_0$, there is a bijection $\varphi$ between $T^{a,b}(\nu,k)$ and the set of $(an+b)$-color compositions of $\nu$ with $k$ parts. 
\end{theorem}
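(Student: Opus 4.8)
The plan is to build the bijection explicitly and coordinate-wise; I will describe the map from $(an+b)$-color compositions to domino compositions together with its inverse, and take $\varphi$ to be the latter. First observe that a $k$-part $(an+b)$-color composition of $\nu$ is nothing but an ordered $k$-tuple of \emph{colored parts} $(p_i,c_i)$ with $p_i\ge 1$, $1\le c_i\le ap_i+b$, and $p_1+\dots+p_k=\nu$; likewise an element of $T^{a,b}(\nu,k)$ is an ordered $k$-tuple of colored $\nu$-dominoes, each being either a nonzero $\nu$-domino in one of $a$ colors or a zero $\nu$-domino in one of $b$ colors, whose $2k$ entries sum to $\nu$ plus the number of nonzero dominoes among them. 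So it suffices to describe a bijection at the level of a single colored part versus a single colored $\nu$-domino, to record how it changes the running total, and then to apply it in each coordinate.

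For the single-part map, split the $ap+b$ colors available to a part of size $p$ into $a$ consecutive blocks of $p$ colors each followed by $b$ extra colors. Given $(p,c)$: if $1\le c\le ap$, use the division algorithm to write $c=(s-1)p+t$ uniquely with $1\le s\le a$ and $1\le t\le p$, and send $(p,c)$ to the nonzero $\nu$-domino $(\alpha,\beta)=(t,\,p-t+1)$ in color $s$; if $ap<c\le ap+b$, send $(p,c)$ to the zero $\nu$-domino $(p,0)$ in color $c-ap$. One checks this is legitimate: since $1\le t\le p\le\nu$ we get $0<\alpha\le\nu$ and $1\le\beta=p-t+1\le p\le\nu$, so the first case really does produce a \emph{nonzero} $\nu$-domino — this is the point of the $+1$ shift in $\beta$ — and the zero case is immediate.

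The inverse reads off, from a nonzero $\nu$-domino $(\alpha,\beta)$ in color $s$, the part $p=\alpha+\beta-1\ge 1$ with color $c=(s-1)p+\alpha$ (which lies in $\{1,\dots,ap\}$ since $\beta\ge 1$ forces $\alpha\le p$), and from a zero $\nu$-domino $(\alpha,0)$ in color $r$ the part $p=\alpha$ with color $c=ap+r\in\{ap+1,\dots,ap+b\}$; uniqueness of the representation $c=(s-1)p+t$ makes the two maps mutually inverse on a single part. Applying the map in each coordinate, a $k$-tuple of colored parts of total $\nu$ with exactly $j$ parts of \emph{type A} (those with $c\le ap$) is sent to a $k$-tuple of colored $\nu$-dominoes with $j$ nonzero (in $a$ colors) and $k-j$ zero (in $b$ colors), and since each nonzero domino has $\alpha+\beta=p+1$ while each zero domino has $\alpha=p$, the entries sum to $\sum_i p_i+j=\nu+j$. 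Hence the map restricts to a bijection from the $k$-part $(an+b)$-color compositions of $\nu$ having exactly $j$ type-A parts onto $T^{a,b}_j(\nu,k)$, and letting $j$ run over $0,\dots,k$ yields the desired bijection with $T^{a,b}(\nu,k)=\bigsqcup_{j} T^{a,b}_j(\nu,k)$, whose inverse is $\varphi$.

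The only delicate point is the coordinate-wise color encoding: decomposing the $ap+b$ colors as $a$ blocks of length $p$ plus $b$ extras, arranging that each type-A part produces a strictly nonzero domino, and checking that every tile obtained is an honest $\nu$-domino (both entries in $\{0,\dots,\nu\}$, which follows from $p\le\nu$). Once that is pinned down, the matching of parts with dominoes and the accounting of the total size and of the counts $j$ and $k-j$ are routine. As a consistency check, Lemma~\ref{lem:j_nonzero} together with the formula for $c_{\nu,k}(an+b)$ already gives $|T^{a,b}(\nu,k)|=c_{\nu,k}(an+b)$, so a bijection must exist; the content of the theorem is that the natural correspondence above is one.
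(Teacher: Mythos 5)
Your proof is correct and takes essentially the same route as the paper: an explicit coordinate-wise correspondence that splits the $ap+b$ colors of a part of size $p$ into $a$ blocks of length $p$ (giving the nonzero dominoes, with the block index as the domino color) plus $b$ extra colors (giving the zero dominoes), with the size and count bookkeeping you carry out. The only difference is cosmetic: your nonzero domino $(t,\,p-t+1)$ is the paper's $(p-t+1,\,t)$ with the two cells swapped, so your $\varphi$ agrees with the paper's up to this relabeling.
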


\begin{proof}
We start by discussing the case when $a=1$. Let $(D_{1},\dots,D_{k})$ be an element of $T^{1,b}(\nu,k)$ with $j$ nonzero $\nu$-dominos. For a nonzero domino $D$, we define $\varphi(D)$ by 
\begin{equation*}
 \domino{\alpha}{\beta} \longrightarrow (\alpha+\beta-1)_{\beta},
\end{equation*}
where the notation $(i)_\ell$ means part $i$ with color $\ell$. For a zero domino $D$ with color $\delta\le b$, we define $\varphi(D)$ by
\begin{equation*}
 \domino{\alpha}{0}_{\,\delta} \longrightarrow (\alpha)_{\ell}, \text{ where } \ell = \alpha + \delta.
\end{equation*}
If we denote the nonzero dominos by $(\alpha_1,\beta_1)$, \dots, $(\alpha_j,\beta_j)$, and the zero dominos by $(\alpha_{j+1},0)$, \dots, $(\alpha_k,0)$, then by definition $(\alpha_1+\beta_1)+\dots+(\alpha_j+\beta_j)+\alpha_{j+1}+\dots+\alpha_k = \nu+j$, and therefore, $(\alpha_1+\beta_1-1)+\dots+(\alpha_j+\beta_j-1)+\alpha_{j+1}+\dots+\alpha_k = \nu$. In other words, $(\varphi(D_{1}),\dots,\varphi(D_{k}))$ is an $(n+b)$-color composition of $\nu$ with $k$ parts.

Conversely, let $\big((i_1)_{\ell_1},\dots,(i_k)_{\ell_k}\big)$ be an $(n+b)$-color composition of $\nu$ such that $j$ of its parts are of the form $(i)_\ell$ with $\ell\le i$.  If $(i)_\ell$ is such a part, then we define $\psi((i)_\ell)$ by
\begin{equation*}
 (i)_\ell \longrightarrow \domino{\alpha_i}{\ell}\,, \text{ where } \alpha_i = i-\ell+1,
\end{equation*}
and if part $(i)_\ell$ is such that $\ell=i+\delta>i$, then we define $\psi((i)_\ell)$ by  
\begin{equation*}
 (i)_\ell \longrightarrow \domino{i}{0}_{\,\delta}.
\end{equation*}
In particular, the components of a domino $\psi((i)_\ell)$ add to $i+1$ if $\ell\le i$ or they add to $i$ if $\ell>i$. Since $i_1+\dots+i_k=\nu$, we get that $\big(\psi((i_1)_{\ell_1}),\dots,\psi((i_k)_{\ell_k})\big)$ is a $\nu$-domino composition in $T^{1,b}_j(\nu,k)$. Clearly, $\psi$ is the inverse of $\varphi$.

\medskip 
For $a>1$ the argument is similar. In this case, for a nonzero domino $D_\gamma\in T^{a,b}(\nu,k)$ with color $1\le \gamma \le a$, we define $\varphi(D_\gamma)$ by
\begin{equation*}
 \domino{\alpha}{\beta}_{\,\gamma} \longrightarrow (\alpha+\beta-1)_{\ell}, \text{ where } \ell = (\alpha+\beta-1)(\gamma-1)+\beta,
\end{equation*}
and for a zero domino $D_\delta$ with color $1\le\delta\le b$, we define $\varphi(D_\delta)$ by
\begin{equation*}
 \domino{\alpha}{0}_{\,\delta} \longrightarrow (\alpha)_{\ell}, \text{ where } \ell = a\alpha + \delta.
\end{equation*}

The inverse map is obtained as follows. For a part $i$ with color $\ell$, $1\le \ell\le ai+b$, write $\ell=qi+r$ with $0<r\le i$ and define a $\nu$-domino as follows:
\begin{align*}
 \text{ if } q<a &: \;\; (i)_\ell \longrightarrow \domino{\alpha_i}{r}_{\,q+1} \text{ with } \alpha_i = i-r+1, \\
 \text{ if } q=a &: \;\; (i)_\ell \longrightarrow \domino{i}{0}_{\,r}, 
\end{align*}
where the subscript outside the domino indicates its color.
\end{proof}

\begin{example}
In the context of $(n+2)$-color compositions, we have

\begin{center}
\small
\begin{tabular}{lll}
 $1_1$ & $\leftrightarrow$ & $\domino{1}{1}$ \\[5pt]
 $1_2$ & $\leftrightarrow$ & $\domino{1}{0}_{\,1}$ \\[5pt]
 $1_3$ & $\leftrightarrow$ & $\domino{1}{0}_{\,2}$
\end{tabular} 
\begin{tabular}{lll}
 $2_1$ & $\leftrightarrow$ & $\domino{2}{1}$ \\[5pt]
 $2_2$ & $\leftrightarrow$ & $\domino{1}{2}$ \\[5pt]
 $2_3$ & $\leftrightarrow$ & $\domino{2}{0}_{\,1}$ \\[5pt]
 $2_4$ & $\leftrightarrow$ & $\domino{2}{0}_{\,2}$
\end{tabular} 
\begin{tabular}{lll}
 $3_1$ & $\leftrightarrow$ & $\domino{3}{1}$ \\[5pt]
 $3_2$ & $\leftrightarrow$ & $\domino{2}{2}$ \\[5pt]
 $3_3$ & $\leftrightarrow$ & $\domino{1}{3}$ \\[5pt]
 $3_4$ & $\leftrightarrow$ & $\domino{3}{0}_{\,1}$ \\[5pt]
 $3_5$ & $\leftrightarrow$ & $\domino{3}{0}_{\,2}$
\end{tabular}
\medskip
\end{center}
For example, the composition $(3_5,1_2,3_2)$ of $7$ corresponds to 
\[ \domino{3}{0}_{\,2}\; \domino{1}{0}_{\,1}\; \domino{2}{2}. \] 
\end{example}

\medskip
As a direct consequence of Theorem~\ref{thm:bijection} and Lemma~\ref{lem:j_nonzero}, we obtain:
\begin{corollary}
The number of $(an+b)$-color compositions of $\nu$ with $k$ parts is given by
\begin{equation*}
 c_{\nu,k}(an+b) = \sum_{j=0}^k a^jb^{k-j} \binom{k}{j}\binom{\nu+j-1}{\nu-k}.
\end{equation*}
\end{corollary}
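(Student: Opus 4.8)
The plan is to read this off directly from Theorem~\ref{thm:bijection} and Lemma~\ref{lem:j_nonzero}; no new computation is needed. First I would invoke the bijection: since $\varphi$ matches $T^{a,b}(\nu,k)$ with the set of $(an+b)$-color compositions of $\nu$ having exactly $k$ parts, these two finite sets have the same cardinality, so $c_{\nu,k}(an+b) = \bigl|T^{a,b}(\nu,k)\bigr|$.

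Next I would decompose $T^{a,b}(\nu,k)$ into the pieces $T^{a,b}_j(\nu,k)$ from the definition. The point worth recording is that this union is \emph{disjoint}: for a given $\nu$-domino composition, the number $j$ of nonzero $\nu$-dominos it contains is an invariant of the composition, so no element lies in two different $T^{a,b}_j(\nu,k)$. Hence $\bigl|T^{a,b}(\nu,k)\bigr| = \sum_{j}\bigl|T^{a,b}_j(\nu,k)\bigr|$, and Lemma~\ref{lem:j_nonzero} evaluates each summand as $a^jb^{k-j}\binom{k}{j}\binom{\nu+j-1}{\nu-k}$.

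Finally I would reconcile the index range. In the definition the union runs over $0\le j\le\nu$ (subject to the standing constraint $j\le k$), while the claimed formula sums over $0\le j\le k$; these agree because whenever $j>k$ the factor $\binom{k}{j}$ vanishes, so truncating or extending the range changes nothing. Combining these three observations yields $c_{\nu,k}(an+b)=\sum_{j=0}^k a^jb^{k-j}\binom{k}{j}\binom{\nu+j-1}{\nu-k}$, which also re-derives the formula obtained analytically in the theorem of Section~\ref{sec:comp-with-parts}.

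The main obstacle is essentially nonexistent, since the substance has been front-loaded into the bijection (Theorem~\ref{thm:bijection}) and the direct count (Lemma~\ref{lem:j_nonzero}); the only care needed is the bookkeeping above—checking that the union is disjoint, so cardinalities add, and that the summation bounds line up.
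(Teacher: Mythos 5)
Your proposal is correct and matches the paper exactly: the paper states the corollary as a direct consequence of Theorem~\ref{thm:bijection} and Lemma~\ref{lem:j_nonzero}, which is precisely your argument. Your added bookkeeping (disjointness of the $T^{a,b}_j(\nu,k)$ and the vanishing of $\binom{k}{j}$ for $j>k$) only makes explicit what the paper leaves implicit.
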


\section{Other examples}
\label{sec:other}

We finish with two examples related to $(n-1)$-color and $(n-2)$-color compositions.

\begin{example}[$a=1$, $b=-1$]
In this case, we have that
\[ c_{\nu,k}(n-1) = \sum_{j=0}^{k} (-1)^{k-j} \binom{k}{j} \binom{\nu+j-1}{\nu-k} \]
is the number of compositions of $\nu$ with $k$ parts with no part 1 and such that each part $i>1$ may be colored in $i-1$ different ways. This is also the number of $n$-color compositions of $\nu$ with $k$ parts and no color 1.
\end{example}

\begin{example}[$a=1$, $b=-2$]
Let $\mathscr{C}_{n-2}(\nu,k)$ be the set of compositions of $\nu$ with $k$ parts such that:
\begin{itemize}
\item[$\circ$] there is no part 2
\item[$\circ$] each part $i>2$ maybe colored in $i-2$ different ways.
\end{itemize}
\smallskip
If $\mathscr{C}^{1,\text{even}}_{n-2}(\nu,k)$ denotes the set of compositions in $\mathscr{C}_{n-2}(\nu,k)$ with an even number of 1's, and $\mathscr{C}^{1,\text{odd}}_{n-2}(\nu,k)$ is the set of compositions with an odd number of 1's, then we have
\begin{equation*}
 c_{\nu,k}(n-2) = \big|\mathscr{C}^{1,\text{even}}_{n-2}(\nu,k)\big| - \big|\mathscr{C}^{1,\text{odd}}_{n-2}(\nu,k)\big|,
\end{equation*}
which implies
\begin{gather*}
 c_{\nu,\nu}(n-2) = (-1)^\nu, \\
 c_{\nu,k}(n-2) = \sum_{j=1}^{k} (-1)^{k-j} \binom{k}{j} \binom{\nu-k-1}{2j-1} \;\text{ for } k<\nu.
\end{gather*}

\medskip
Moreover, by \eqref{eq:recurrence}, the sequence defined by $W_\nu = \sum_{k=1}^{\nu} c_{\nu,k}(n-2)$ satisfies the recurrence relation
\begin{gather*}
 W_1 = -1, \quad W_2=1, \\
  W_{\nu} = W_{\nu-1} + W_{\nu-2} \;\text{ for } \nu >2.
\end{gather*}
In other words, $W_\nu$ is the Fibonacci number $F_{\nu-3}$ and we get the identity
\[  F_{\nu-3} = \sum_{k=1}^{\nu} \sum_{j=1}^{k} (-1)^{k-j} \binom{k}{j} \binom{\nu-k-1}{2j-1}. \]
\end{example}

\subsection*{Acknowledgement}
We are grateful for the opportunity to present at the ``48th Southeastern International Conference on Combinatorics, Graph Theory \& Computing'' in the spring of 2017. The results of this paper were inspired by Brian Hopkins' talk on {\em Color Restricted n-Color Compositions} and further conversations with him during the conference. 


\end{document}